\documentclass[12pt]{article}
\usepackage{graphicx}
\usepackage{amsmath,amsfonts,latexsym,amscd,amssymb,theorem}
%\usepackage{moreverb,rotating,graphics,amsfonts,latexsym,amscd}
%\tolerance = 1500
\usepackage{graphicx}
\usepackage[ansinew]{inputenc}
\usepackage[shortlabels]{enumitem}

\usepackage{epsfig}
\usepackage{amsmath}
\usepackage{amssymb}
\usepackage{afterpage}
\usepackage{selinput}

%\pagenumbering{roman}
\hoffset = 0pt \voffset = 0pt \textwidth = 465pt \textheight = 640pt
\topmargin = 0pt \headheight = 0pt \headsep = 0pt \oddsidemargin =
8pt \evensidemargin = 8pt \marginparwidth = 0pt \marginparsep = 0pt

\newcommand{\ba}{\begin{eqnarray}}
\newcommand{\ea}{\end{eqnarray}}

\newtheorem{thm}{Theorem}[section]

\newtheorem{conjecture}{Conjecture}

\newtheorem{theorem}[thm]{Theorem}

\newtheorem{lemma}[thm]{Lemma}

\newtheorem{claim}[thm]{Claim}

\makeatletter
\newcommand*{\rom}[1]{\expandafter\@slowromancap\romannumeral #1@}
\makeatother

%%%%%%%%%%%%%% macros oa %%%%%%%%%%%%%%

\begin{document}
\title{\textbf{Forests and the Strong Erd\"{o}s-Hajnal Property}}
\maketitle

%% Group authors per affiliation:

\begin{center}
\author{Soukaina ZAYAT \footnote{Department of Mathematics, Lebanese University, Hadath, Lebanon. (soukaina.zayat.96@outlook.com)}}
\end{center}

\begin{abstract}
 An equivalent directed version of the celebrated unresolved conjecture of Erd\"{o}s and Hajnal proposed by Alon et al. states that for every tournament $H$ there exists $ \epsilon(H) > 0 $ such that every $H$-free $n$-vertex tournament $T$ contains a transitive subtournament of order at least $ n^{\epsilon(H)} $. A tournament H has the strong EH-property if there exists $c > 0$ such that for every $H$-free tournament $T$ with $\mid$$T$$\mid$ $ > 1$, there
exist disjoint vertex subsets $A$ and $B$, each of cardinality at least $c$$\mid$$T$$\mid$ and every vertex of $A$
is adjacent to every vertex of $B$. Berger et al. proved that  the unique five-vertex tournament denoted by $C_5$, where every vertex has two inneighbors and two outneighbors has the strong EH-property. It is known that every tournament with the strong EH-property also has the EH-property. In this paper we construct an infinite class of tournaments $-$ the so-called spiral galaxies and we prove that every spiral galaxy has the strong EH-property.   
\end{abstract}
\textbf{Keywords.} Tournament, ordering, pure pair, Erd\"{o}s-Hajnal Conjecture, forest.

\section{Introduction}
Let $ G $ be an undirected graph. A \textit{clique} in $G$ is a set of pairwise adjacent vertices and a \textit{stable set} in $G$ is a set of pairwise nonadjacent vertices.  A \textit{tournament} is an orientation of a complete graph. A tournament is \textit{transitive} if it contains no directed cycle. Let $H$ be a tournament.  If $(u,v)\in A(H)$, then we say that $u$ is \textit{adjacent to} $v$, and we write $u\rightarrow v$. In this case, we also say that $v$ is \textit{adjacent from} $u$, and we write $v\leftarrow u$. For two disjoint sets of vertices $V_{1},V_{2}$ of $H$, we say that $V_{1}$ is \textit{complete to} $V_{2}$ (equivalently $V_{2}$ is \textit{complete from} $V_{1}$) if every vertex of $V_{1}$ is adjacent to every vertex of $V_{2}$. We say that a vertex $v$ is \textit{complete to} (resp. \textit{from}) a set $V$ if $\lbrace v \rbrace$ is complete to (resp. from) $V$, and we write $v \rightarrow V$ (resp. $v \leftarrow V$).   Let $X \subseteq V(H)$. The \textit{subtournament of} $H$ \textit{induced by} $X$ is denoted by $H$$\mid$$X$. Let $S$ be a tournament.  We say that $H$ \textit{contains} $S$ if $S$ is isomorphic to $H$$\mid$$X$ for some $X \subseteq V(H)$. If $H$ does not contain $S$, we say that $H$ is $S$-$free$.\vspace{3mm}\\   
In 1989 Erd\"{o}s and Hajnal proposed the following conjecture \cite{jhp} (EHC):
\begin{conjecture} For any undirected graph $H$ there exists $ \epsilon(H) > 0 $ such that every $n$-vertex undirected graph that does not contain $H$ as an induced subgraph contains a clique or a stable set of size at least $ n^{\epsilon(H)}. $
\end{conjecture}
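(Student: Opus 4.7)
Since Conjecture 1.1 is the famous Erd\"{o}s--Hajnal conjecture, which has remained open since 1989 despite intense effort, any proof proposal here is necessarily aspirational; I describe the natural line of attack that has driven much of the literature. The plan is to induct on $|V(H)|$: the base case $|V(H)|\le 2$ is trivial, and for the inductive step, given an $H$-free graph $G$ on $n$ vertices, the goal is to find two disjoint vertex subsets $A,B\subseteq V(G)$, each of size at least $n^{\delta}$ for some $\delta=\delta(H)>0$, with $A$ either complete or anticomplete to $B$; such a pair is called a \emph{pure pair}. Once such a pair is secured, the inductive hypothesis applied to $G|A$ and $G|B$ produces cliques or stable sets of polynomial size inside each, and combining these across the pure pair yields the desired clique or stable set in $G$ of size $n^{\epsilon(H)}$.

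The substantive step is therefore the pure pair property for $H$-free graphs. I would attempt this through a density-increment argument: in an $H$-free graph of sufficiently balanced density, either a vertex of abnormally high or low degree lets one pass to a neighborhood or anti-neighborhood of higher density (shrinking the ``distance'' to a complete or empty graph), or else the edge distribution is so regular that the absence of an induced $H$ forces a Ramsey-type partition into large pure pairs. A complementary strategy fixes an ordering $v_1,\dots,v_k$ of $V(H)$ and grows greedily the set of candidate vertices that could play the role of $v_j$; the $H$-freeness of $G$ then forces repeated branching into large pure pair structures at some stage.

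The main obstacle, which is precisely why the conjecture remains open, is that this plan is essentially circular: the pure pair property for $H$-free graphs (often called the polynomial R\"{o}dl conjecture for $H$) is known to be tightly interlinked with the Erd\"{o}s--Hajnal property itself. Consequently, no purely local induction can close the argument without genuinely new structural information about $H$-free graphs, and such information appears to require exploiting particular features of $H$. For tightly structured $H$ -- paths, small trees, and, in the tournament reformulation, the spiral galaxies constructed in the present paper -- one can verify the pure pair property directly and carry the plan through; for arbitrary $H$, no such direct verification is known. My realistic expectation is therefore that the induction above succeeds only for restricted families of $H$, and that Conjecture 1.1 in full generality will require a fundamentally new idea beyond the scope of any single tractable proof proposal.
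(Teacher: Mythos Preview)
The statement you were asked to address is not a theorem of the paper but the Erd\H{o}s--Hajnal conjecture itself, which the paper records only as background (Conjecture~1) and makes no attempt to prove. Consequently there is no ``paper's own proof'' to compare against, and you recognized this correctly by framing your response as a survey of the standard inductive/pure-pair strategy rather than a claimed proof. Your summary of that strategy and of its known circularity with the polynomial R\"odl property is accurate, and your assessment that the full conjecture requires ideas beyond current techniques matches the state of the art; nothing further is expected here.
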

In 2001 Alon et al. proved \cite{fdo} that Conjecture $1$ has an equivalent directed version, as follows:
\begin{conjecture} \label{a} For any tournament $H$ there exists $ \epsilon(H) > 0 $ such that every $ H $-free tournament with $n$ vertices contains a transitive subtournament of order at least $ n^{\epsilon(H)}. $
\end{conjecture}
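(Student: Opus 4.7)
The plan is to attack Conjecture \ref{a} by induction on $|V(H)|$, splitting the inductive step according to the substitution-decomposability of $H$. The base cases $|V(H)| \le 2$ are immediate: any tournament on $n\ge 2$ vertices already contains a transitive subtournament of size $2$. For the inductive step I would first dispose of the \emph{decomposable} case, where $H$ has a non-trivial homogeneous set $X$ (that is, $1<|X|<|V(H)|$ and every vertex outside $X$ is complete to, or complete from, $X$). Here one writes $H$ as a substitution $T[H_1,\dots,H_k]$ of strictly smaller tournaments and invokes a directed substitution lemma in the spirit of Alon--Pach--Solymosi: if $T$ and each $H_i$ satisfy the conclusion of Conjecture \ref{a} with some exponent, then so does $H$. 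The proof runs by partitioning an $H$-free tournament $S$ according to a suitable equivalence on vertices, applying the inductive hypothesis inside the large classes, and stitching together the transitive subtournaments obtained.

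The substantive case is \emph{prime} $H$, where no non-trivial homogeneous set exists and the substitution reduction gives nothing. Here the strategy I would pursue is to first establish the \emph{strong} EH-property for $H$: produce disjoint sets $A,B\subseteq V(S)$, each of size at least $c|S|$, with $A$ complete to $B$ in every $H$-free tournament $S$. Once such a pure pair is available, one recurses into $A$ and $B$ (both still $H$-free), extracts transitive subtournaments $T_A,T_B$ from each by iterating the pure-pair construction, and concatenates them along the complete-to relation to obtain a transitive subtournament of size polynomial in $|S|$. Tuning the resulting recurrence yields the required exponent $\epsilon(H)$.

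The main obstacle, and the reason Conjecture \ref{a} remains open after more than three decades, is exactly the prime case: producing the pure pair (or any other usable structural dividend) in an $H$-free tournament when $H$ has no homogeneous set. To date this has only been achieved for a handful of prime tournaments --- most notably $C_5$ in the work of Berger et al.\ cited in the abstract --- and the contribution of the present paper is precisely to enlarge this list by exhibiting an infinite family (the spiral galaxies) for which the strong EH-property, and hence the EH-property, can be verified. A complete resolution of Conjecture \ref{a} along these lines would require either a structural dichotomy covering \emph{all} prime tournaments or a fundamentally new, non-structural technique; short of such a breakthrough, the best one can hope to do in a single paper is what the abstract promises, namely to push the frontier outward by one more infinite family.
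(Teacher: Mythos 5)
This statement is not a theorem in the paper and has no proof there: it is the directed form of the Erd\H{o}s--Hajnal conjecture, which the paper records as open (Alon et al.\ only proved that it is \emph{equivalent} to the undirected Conjecture~1). Your ``proof proposal'' correctly recognises this and does not in fact claim to prove the conjecture, so there is nothing in the paper against which to check a proof. Judged as a discussion of strategy, your decomposable/prime dichotomy is the standard framing, and the substitution reduction for decomposable $H$ is real (Alon--Pach--Solymosi show the EH-property is closed under substitution). But the route you propose for prime $H$ --- establish the \emph{strong} EH-property and recurse --- cannot work in general, and the reason is recorded in the very paper you are reading: by the Chudnovsky--Scott--Seymour--Spirkl result quoted as Theorem~1.3, a tournament with the strong EH-property must admit a vertex ordering whose backward arc digraph is a forest. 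Most prime tournaments (e.g.\ any tournament with a ``2-cycle'' of backward arcs under every ordering) fail this, so they provably do \emph{not} have the strong EH-property, and the pure-pair recursion you sketch is unavailable for them. The strong EH-property is thus a strictly smaller target, usable only for the forest-orderable tournaments towards which Conjecture~3 and the present paper's spiral-galaxy result are aimed; it does not furnish a path to the full Conjecture~\ref{a}. Your final paragraph, acknowledging that only a handful of primes have been handled and that a fundamentally new technique is needed, is an accurate description of the state of the art and of what this paper actually contributes.
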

A tournament $H$ \textit{has the Erd\"{o}s-Hajnal property (EH-property)} if there exists $ \epsilon(H) > 0 $ such that every $ H $-free tournament $T$ with $n$ vertices contains a transitive subtournament of size at least $ n^{\epsilon(H)}. $ \\

Let $ \theta = (v_{1},...,v_{n}) $ be an ordering of the vertex set $V(T)$ of an $ n $-vertex tournament $T$. We say that a vertex $ v_{j} $ is \textit{between two vertices $ v_{i},v_{k} $ under} $ \theta = (v_{1},...,v_{n}) $ if $ i < j < k $ or $ k < j < i $. An arc $ (v_{i},v_{j}) $ is a \textit{backward arc under} $ \theta $ if $ i > j $. The \textit{set of backward arcs of $T$ under} $ \theta $ is denoted by $A_T(\theta) $. The \textit{backward arc digraph of $T$ under} $ \theta $, denoted by $ B(T,\theta) $, is the subdigraph that has vertex set $V(T)$ and arc set $A_T(\theta)$.  We say that \textit{$V(T)$ is the disjoint union of $X_1$,...,$X_t$ under $\theta$} if $V(T)$ is the disjoint union of $X_1$,...,$X_t$ and $A(B(T,\theta ))= \displaystyle{\bigcup_{i=1}^{t}}A(B(T|X_i,\theta_{i}))$, where $\theta_{i}$ is the restriction of $\theta$ to $X_i$.  

A tournament $S$ on $p$ vertices with $V(S)= \lbrace u_{1},u_{2},...,u_{p}\rbrace$ is a \textit{right star} (resp. \textit{left star}) (resp. \textit{middle star}) if there exists an ordering $\beta = (u_{1},u_{2},...,u_{p})$ of its vertices such that $A(B(S,\beta ))=\{ (u_{p},u_{i}): i=1,...,p-1\}$ (resp. $A(B(S,\beta ))=\{ (u_{i},u_1): i=2,...,p\}$) (resp. $A(B(S,\beta ))=\{ (u_{i},u_m): i=m+1,...,p\}\cup \{ (u_{m},u_{i}): i=1,...,m-1\}$, where $m\in\{2,...,p-1\}$). In this case we write $S = \lbrace u_{1},u_{2},...,u_{p}\rbrace$ and we call $\beta$ a \textit{right star ordering} (resp. \textit{left star ordering}) (resp. \textit{middle star ordering}) of $S$, $u_{p}$ (resp. $u_{1}$) (resp. $u_{m}$) the \textit{center of} $S$, and $u_{1},...,u_{p-1}$ (resp. $u_{2},...,u_{p}$)(resp. $u_{1},...,u_{m-1},u_{m+1},...,u_p$)   the \textit{leaves of} $S$. A \textit{frontier star} is a left star or a right star. A \textit{star} is a middle star or a frontier star. A \textit{star ordering} is a right or left or middle star ordering.
 
 A \textit{star} $S=\lbrace v_{i_{1}},...,v_{i_{t}}\rbrace$ \textit{of $T$ under $\theta$} (where $i_{1}<...<i_{t}$) is the subtournament of $T$ induced by $\lbrace v_{i_{1}},...,v_{i_{t}}\rbrace$ such that $S$ is a star and $S$ has the star ordering $ (v_{i_{1}},...,v_{i_{t}})$ under $\theta$ (i.e $(v_{i_{1}},...,v_{i_{t}})$ is the restriction of $\theta$ to $V(S)$ and $ (v_{i_{1}},...,v_{i_{t}})$ is a star ordering of $S$).  \vspace{2mm} 

Let $T$ be a tournament and assume that there exists an ordering $ \theta $ of its vertices such that $V(T)$ is the disjoint union of $V(S_{1}),...,V(S_{l}),X$ under $\theta$, where $S_{1},...,S_{l}$ are the stars of $T$ under $\theta$, and for every $x\in X$, $\lbrace x \rbrace$ is a singleton component of $B(T,\theta)$. In this case $T$ is called \textit{nebula} and $\theta$ is called a \textit{nebula ordering}. If all the stars of $T$ under $\theta$ are frontier stars and no center of a star is between leaves of another star under $\theta$, then $\theta$ is called a \textit{galaxy ordering} and $T$ is called a \textit{galaxy} under $\theta$. If moreover $X$ is empty, then $T$ is called a \textit{regular galaxy} under $\theta$. 

In \cite{polll} Berger et al. proved that every galaxy has the EH-property, and in \cite{kg,SZ,SZG} Conjecture \ref{a} was proved for more general classes of tournaments.

Let $T$ be a tournament. A \textit{pure pair} in $T$ is an ordered pair $(A,B)$ of disjoint subsets of $V(T)$ such
that every vertex in $A$ is adjacent to every vertex in $B$, and its order denoted by $\mathcal{O}(A,B)$ is $min(\mid$$A$$\mid , \mid$$B$$\mid)$. Define $\mathcal{P}(T):= max\{ \mathcal{O}(A,B): (A,B)$ is a pure pair of $T\}$. We call $T$ $ \alpha$-\textit{coherent} for $ \alpha > 0 $ if $\mathcal{P}(T) < $ $\alpha \mid $$T$$\mid$.\vspace{2mm}\\
A tournament $H$ has the \textit{strong EH-property} if there exists $\alpha > 0$ such that for every $H$-free
tournament $T$ with $\mid$$T$$\mid > 1$, we have: $\mathcal{P}(T)\geq \alpha\mid$$T$$\mid$. It is easy to see that for
every tournament with the strong EH-property also has the EH-property \cite{polll,SEH}.\vspace{3mm}\\
In \cite{SEH} Berger et al. proved that  the unique five-vertex tournament denoted by $C_5$, where every vertex has two inneighbors and two outneighbors has the strong EH-property. In \cite{chudnovsky} Chudnovsky et al. asked if it might be true that a tournament $H$ has the strong $EH$-property if and only if its vertex set has an ordering in which the backward arc digraph of $H$ under this ordering is a forest. Chudnovsky et al. proved that the necessary condition is true:
\begin{theorem}\cite{chudnovsky}
If a tournament has the strong EH-property then there is an ordering of $V(H)$ for which the
backward arc digraph is a forest.
\end{theorem}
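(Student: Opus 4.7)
I would argue by contrapositive: assume that for every ordering $\theta$ of $V(H)$ the backward arc digraph $B(H,\theta)$ contains an undirected cycle, and then construct, for every $\alpha>0$, an $H$-free tournament $T$ with $\mathcal{P}(T)<\alpha|T|$, directly contradicting the strong EH-property.

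The classical engine for driving the coherence ratio $\mathcal{P}(T)/|T|$ to $0$ is a uniformly random tournament $R_{n}$ on $n$ vertices, for which a standard union bound gives $\mathcal{P}(R_{n})=O(\log n)$ with high probability (a putative pure pair of size $k$ occurs with probability $2^{-k^{2}}$, dwarfing the $O(n^{2k})$ candidates once $k\gg\log n$). Since $R_{n}$ contains every fixed $H$ with high probability, pure randomness is not usable, so I would couple it with the substitution operation $A[B]$ (replace each vertex of $A$ by a disjoint copy of $B$). This operation preserves coherence in the sense that $\mathcal{P}(A[B])/|A[B]|\le\max(\mathcal{P}(A)/|A|,\mathcal{P}(B)/|B|)$, and by the modular decomposition theory a \emph{prime} tournament embeds in $A[B]$ only if it embeds in $A$ or in $B$. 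Reducing to prime $H$ by induction on modular depth, and choosing a small explicit $H$-free pattern $A$, it suffices to produce an $H$-free tournament $B$ with arbitrarily small coherence ratio.

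The key difficulty lies in producing this $B$. A naive vertex-deletion on $R_{n}$ does not suffice, because a fixed $H$ has $\Theta(n^{|V(H)|})$ copies in $R_{n}$ and each vertex lies in $\Theta(n^{|V(H)|-1})$ of them, so killing every copy would require deleting a positive fraction of vertices. Instead I would expect the proof to invoke a more refined construction, such as the $H$-free random process, or an explicit algebraic tournament (e.g.\ a Paley tournament on a prime $p$), in each case combined with structural information about $H$ coming from the no-forest hypothesis. The hypothesis should obstruct embedding $H$ into the chosen construction via a configuration the construction is engineered to forbid: an embedding $H\hookrightarrow B$ inherits an ordering from $B$, and the backward arc cycle that must exist in $B(H,\theta)$ for this inherited $\theta$ forces a concrete forbidden pattern in $B$.

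The main obstacle, I expect, is converting the abstract hypothesis ``every ordering of $V(H)$ produces a cycle in $B(H,\theta)$'' into a single bounded combinatorial certificate that can be forbidden in the construction. The hypothesis is a statement over all orderings simultaneously, while any workable construction rules out a single concrete obstruction. I therefore anticipate that the heart of the argument is a structural lemma of the form ``if $H$ admits no forest ordering, then $H$ contains some specific bounded sub-configuration $Q$ whose absence precludes $H$,'' after which designing $B$ to avoid $Q$ while keeping $\mathcal{P}(B)/|B|$ vanishing becomes the routine part. Establishing this structural lemma — and verifying that the chosen construction really avoids $Q$ — is where almost all of the technical work should concentrate.
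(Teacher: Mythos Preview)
The paper does not prove this theorem; it merely cites it from \cite{chudnovsky} (Chudnovsky, Scott, Seymour, Spirkl, \textit{Pure Pairs X}) and immediately moves on to the converse conjecture. There is therefore no in-paper argument to compare your proposal against.

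As for the proposal itself: the high-level strategy (contrapositive; build $H$-free tournaments with coherence ratio tending to $0$; exploit substitution and reduction to prime $H$) is the natural framework, and your remarks on $\mathcal{P}(A[B])$ and on prime tournaments under substitution are correct. But you explicitly stop at the decisive step. You correctly flag that ``no forest ordering'' is a universal statement over all orderings, while any construction forbids a single local pattern, and you posit that a structural lemma must bridge this --- yet you supply neither the lemma nor the construction. That is the entire content of the theorem; without it what you have is an outline of where a proof would live, not a proof. In particular, the speculative options you list (the $H$-free random process, Paley tournaments) are not known to work in this generality without precisely the missing structural input, so the plan as written does not close.
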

Unfortunately, the following is still open:
\begin{conjecture}\label{forest}
If a tournament $H$ has an ordering of its vertices for which the backward arc digraph of $H$ under this ordering is a forest, then $H$ has the strong EH-property.
\end{conjecture}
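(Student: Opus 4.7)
The plan is to prove Conjecture \ref{forest} by induction on $|V(H)|$, using the forest structure of $B(H,\theta)$ to decompose $H$ into smaller subtournaments whose backward arc digraphs are still forests.

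For the base cases, $|V(H)| \leq 2$ is trivial, and if $B(H,\theta)$ has no arcs then $H$ is transitive and has the strong EH-property by a direct counting argument. When $B(H,\theta)$ is a disjoint union of stars (i.e.\ a depth-one forest), $H$ is a galaxy, and the spiral galaxies theorem proved in the present paper handles an important sub-case; the genuinely new difficulty arises when $B(H,\theta)$ contains a tree of depth at least two.

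For the inductive step, I would choose a leaf $v_\ell$ of $F := B(H,\theta)$, denote its unique $F$-neighbor by $v_c$, and set $H' := H - v_\ell$. The restriction of $\theta$ to $V(H')$ has backward arc digraph $F - v_\ell$, which is still a forest, so by the inductive hypothesis $H'$ has the strong EH-property with some constant $\alpha' > 0$. Given an $H$-free tournament $T$ with $|T| > 1$, I would argue by contradiction, supposing $T$ is $\alpha$-coherent for some sufficiently small $\alpha > 0$ depending only on $\alpha'$ and $|V(H)|$, and attempt to embed $H$ in $T$. By the inductive hypothesis one can locate many copies of $H'$ in $T$; the goal is then to find one that can be extended by a vertex $w \in T$ playing the role of $v_\ell$. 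The vertex $w$ must be positioned correctly with respect to the lifted ordering, adjacent to the image of $v_c$ in the direction corresponding to the unique backward arc at $v_\ell$, and otherwise forward-adjacent to the rest of the copy of $H'$; any one of these constraints failing on a large fraction of candidate embeddings would yield a pure pair in $T$ contradicting $\alpha$-coherence.

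The main obstacle is precisely this gluing step. When $v_c$ is itself a leaf of $F$ (the galaxy case), the constraints on $w$ are essentially local and a single density argument suffices, which is morally what the spiral galaxy proof exploits. In the general forest case, however, $v_c$ may be an internal vertex, so the embedding of $H'$ that we wish to extend has many vertices whose roles in $T$ are determined by chains of adjacency constraints rooted at $v_c$; these constraints can interact and cause the pure pairs produced during the argument to shrink below linear size. A natural refinement is to process the forest level by level from the leaves inward, maintaining at each stage a large sub-tournament of $T$ in which many candidate embeddings of the already-processed portion of $H$ survive; however, controlling the coherence constant through this iteration so that the final pure-pair constant remains positive is the key technical hurdle, and is precisely the reason that Conjecture \ref{forest} remains open.
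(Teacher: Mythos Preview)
The statement you are addressing is labeled a \emph{conjecture} in the paper, and the paper explicitly says ``Unfortunately, the following is still open'' immediately before stating it. There is no proof in the paper to compare your proposal against; the paper's contribution is Theorem~\ref{mm}, which establishes the strong EH-property only for the restricted class of spiral galaxies, not for arbitrary tournaments whose backward arc digraph is a forest.

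Your proposal is likewise not a proof, and you say so yourself in the final sentence: the induction you sketch breaks down at the gluing step when the neighbor $v_c$ of the removed leaf is an internal vertex of the forest, and you correctly identify the loss of control over the coherence constant through the iteration as ``precisely the reason that Conjecture~\ref{forest} remains open.'' So what you have written is a plausible heuristic outline together with an honest diagnosis of why it fails, not a proof attempt that can be evaluated for correctness. A couple of your side remarks are also imprecise: spiral galaxies are not simply the case where $B(H,\theta)$ is a depth-one forest (they involve pair-stars, whose backward arc digraphs already contain paths of length two), and galaxies in general are not covered by the spiral-galaxy theorem of this paper. In short, there is a genuine gap---the entire argument---and both you and the paper acknowledge that no one currently knows how to close it.
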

In \cite{chudnovsky} Chudnovsky et al. made a small step towards Conjecture \ref{forest} by showing that if a tournament $H$ is $C_5$-free and it has an  ordering $\theta$ of its vertices such that $\mid$$A_H(\theta)$$\mid \leq 3$, then it has the strong EH-property. In particular, they proved that every tournament with at most six vertices has the property, except for three six-vertex tournaments that they could not decide.\vspace{3mm}\\
In \cite{SZG} the author and Ghazal proved that every galaxy with spiders has the EH-property (see \cite{SZG} for the detailed description of galaxies with spiders).\vspace{4mm}

 In this paper we prove Conjecture \ref{forest} for an infinite family of tournaments $-$ the so-called \textit{spiral galaxies}. Spiral galaxies are nebulas satisfying some conditions to be explained in details in Section \ref{result}. Also note that every spiral galaxy is a galaxy with spiders.

\section{Smooth $(c,\lambda ,w)$-structure}
Let $T$ be a tournament and let $X, Y \subseteq V(T)$ be disjoint. Denote by $e_{X,Y}$ the number of directed arcs $(x,y)$, where $x \in X$ and $y \in Y$. The \textit{directed density from $X$ to} $Y$ is defined as $d(X,Y) = \frac{e_{X,Y}}{\mid X \mid.\mid Y \mid} $. 
\begin{lemma} \cite{polll} \label{s} Let $A_{1},A_{2}$ be two disjoint sets such that $d(A_{1},A_{2}) \geq 1-\lambda$ and let $0 < \eta_{1},\eta_{2} \leq 1$. Let $\widehat{\lambda} = \frac{\lambda}{\eta_{1}\eta_{2}}$. Let $X \subseteq A_{1}, Y \subseteq A_{2}$ be such that $\mid$$X$$\mid$ $\geq \eta_{1} \mid$$A_{1}$$\mid$ and $\mid$$Y$$\mid$ $\geq \eta_{2} \mid$$A_{2}$$\mid$. Then $d(X,Y) \geq 1-\widehat{\lambda}$. 
\end{lemma}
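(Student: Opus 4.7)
The plan is to prove the lemma by a direct counting argument on the ``missing'' arcs from $A_1$ to $A_2$ and then restrict to the subsets $X$ and $Y$. Recall that $d(X,Y)$ is defined as the fraction of ordered pairs $(x,y)\in X\times Y$ for which $(x,y)$ is a directed arc of $T$, so $1-d(X,Y)$ counts the fraction of pairs that are \emph{not} such arcs.

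First, I would translate the density hypothesis $d(A_1,A_2)\geq 1-\lambda$ into an upper bound on the number of non-arcs from $A_1$ to $A_2$: there are at most $\lambda\,|A_1|\,|A_2|$ ordered pairs $(a_1,a_2)\in A_1\times A_2$ that are not directed arcs. Next, since $X\subseteq A_1$ and $Y\subseteq A_2$, every non-arc from $X$ to $Y$ is in particular a non-arc from $A_1$ to $A_2$, hence the number of non-arcs from $X$ to $Y$ is also bounded above by $\lambda\,|A_1|\,|A_2|$.

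Finally, I would divide by $|X|\,|Y|$ and use the size hypotheses $|X|\geq \eta_1|A_1|$ and $|Y|\geq \eta_2|A_2|$ to obtain
$$
1-d(X,Y)\;=\;\frac{\#\{\text{non-arcs from }X\text{ to }Y\}}{|X|\,|Y|}\;\leq\;\frac{\lambda\,|A_1|\,|A_2|}{\eta_1|A_1|\cdot\eta_2|A_2|}\;=\;\frac{\lambda}{\eta_1\eta_2}\;=\;\widehat{\lambda},
$$
which rearranges to $d(X,Y)\geq 1-\widehat{\lambda}$, as required. There is no real obstacle here: the argument is pure bookkeeping, and the key conceptual point is simply that passing from $(A_1,A_2)$ to a pair of subsets $(X,Y)$ can only \emph{concentrate} the non-arcs, so their density inflates by at most the reciprocal of the relative sizes $\eta_1\eta_2$.
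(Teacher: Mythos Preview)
Your argument is correct and is precisely the standard counting proof of this density lemma. Note that the paper does not include its own proof of this statement: it is quoted from \cite{polll} and stated without proof, so there is nothing further to compare against.
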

The following is introduced in \cite{bnmm}.\\
Let $ c > 0, 0 < \lambda < 1 $ be constants, and let $w$ be a $ \lbrace 0,1 \rbrace  $-vector of length $ \mid $$w$$ \mid $. Let $T$ be a tournament with $ \mid $$T$$ \mid$ $ = n. $ A sequence of disjoint subsets $ \chi = (A_{1}, A_{2},..., A_{\mid w \mid}) $ of $V(T)$ is a smooth $ (c,\lambda, w) $-structure if:\\
$\bullet$ whenever $ w_{i} = 0 $ we have $ \mid $$A_{i}$$ \mid$ $ \geq cn $ (we say that $ A_{i} $ is a \textit{linear set}).\\
$\bullet$ whenever $ w_{i} = 1 $ the tournament $T$$\mid$$ A_{i} $ is transitive and $ \mid $$A_{i}$$ \mid$ $ \geq c.tr(T) $ (we say that $ A_{i} $ is a \textit{transitive set}).\\
$\bullet$ $ d(\lbrace v \rbrace, A_{j}) \geq 1 - \lambda $ for $v \in A_{i} $ and $ d(A_{i}, \lbrace v \rbrace) \geq 1 - \lambda $ for $v \in A_{j}, i < j $ (we say that $\chi$ is \textit{smooth}).
\begin{theorem} \cite{SEH} \label{t}
Let $S$ be a $f$-vertex tournament, let $w$ be an all-zero vector. Then there exist $ c > 0 $ such that every $ S $-free tournament contains a smooth $ (c,\frac{1}{f},w) $-structure.
\end{theorem}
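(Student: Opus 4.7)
The plan is to induct on the length $\ell := |w|$, keeping the $f$-vertex tournament $S$ fixed throughout. The base case $\ell = 1$ is trivial: take $A_1 := V(T)$ and $c := 1$, so $|A_1| = n \geq cn$ holds and the smoothness condition between distinct sets is vacuous.

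For the inductive step, assume the result for all-zero vectors of length $\ell$ with constant $c_{\ell} > 0$, and let $T$ be an $S$-free tournament with $|T| = n$. I would build the length-$(\ell+1)$ structure in three stages. First I would establish a \emph{split lemma}: there exist disjoint $X, Y \subseteq V(T)$ with $|X|, |Y| \geq \delta n$ and $d(X, Y) \geq 1 - \lambda_0$, where $\delta = \delta(S) > 0$ and $\lambda_0 > 0$ is a small parameter to be chosen in terms of $c_\ell, \ell, f$. This is the stage that genuinely exploits $S$-freeness: the natural argument is a greedy/Ramsey-type embedding showing that if no such pair existed, one could build a copy of $S$ in $T$ vertex by vertex, at each step descending into the dense forward- or backward-neighborhood. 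Second, I would apply the inductive hypothesis to the $S$-free tournament $T|Y$ to obtain a smooth $(c_\ell, 1/f, w')$-structure $(A_2, \ldots, A_{\ell+1})$ inside $Y$, each $|A_j| \geq c_\ell \delta n$.

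The third stage is a cleaning step producing $A_1 \subseteq X$. Applying Lemma \ref{s} with $\eta_1 = 1$ and $\eta_2 = c_\ell$ boosts the split density to $d(X, A_j) \geq 1 - \lambda_0/c_\ell$ for each $j \geq 2$. A Markov-style count then shows that at most an $(f\lambda_0/c_\ell)$-fraction of vertices $v \in X$ fail the local condition $d(\{v\}, A_j) \geq 1 - 1/f$, and a symmetric count, preceded by a matching shrinkage of each $A_j$ (again invoking Lemma \ref{s} so that the inner smoothness inside $Y$ is only slightly degraded), handles the reverse condition $d(A_1, \{u\}) \geq 1 - 1/f$ for $u \in A_j$. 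Choosing $\lambda_0 < c_\ell/(4\ell f)$ and union-bounding over the $\ell$ sets $A_j$ leaves $A_1 \subseteq X$ of size at least $\delta n / 2$, and the sequence $(A_1, A_2, \ldots, A_{\ell+1})$ is the required smooth $(c_{\ell+1}, 1/f, w)$-structure with $c_{\ell+1}$ an appropriate function of $c_\ell, \delta, f, \ell$.

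The step I expect to be the main obstacle is the split lemma in the first stage: producing disjoint linear-sized sets with forward-density arbitrarily close to $1$ in every $S$-free tournament. This is where the $S$-freeness enters the argument, and the delicate point is to balance the loss of density against the loss of set size at each recursive step of the greedy embedding so that both $\delta$ and $\lambda_0$ remain usable. Once the split lemma is in hand, the recursion and the cleaning in stages two and three reduce to routine applications of Lemma \ref{s} and elementary counting.
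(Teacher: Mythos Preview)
The paper does not contain its own proof of Theorem~\ref{t}: the result is simply quoted from \cite{SEH}, so there is nothing in this paper to compare your proposal against. That said, here are two concrete places where your outline, as written, does not close.

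First, the split lemma you need in stage one is stronger than what a greedy embedding of $S$ typically yields. You require disjoint linear sets $X,Y$ with $d(X,Y)\geq 1-\lambda_0$ for $\lambda_0$ \emph{chosen later}, in particular smaller than $c_\ell/(4\ell f)$, a quantity that shrinks as $\ell$ grows. A vertex-by-vertex embedding of $S$ (descending into a majority neighbourhood at each step) gives a pair with some fixed density gap depending only on $f$, not an arbitrarily small one; turning that into density $1-\lambda_0$ for arbitrary $\lambda_0$ is already essentially the $|w|=2$ case of the theorem, so the ``split lemma'' as stated does not reduce the problem. In the actual argument of \cite{SEH} this is handled differently, via an iterated refinement that produces the whole chain of sets simultaneously rather than peeling off one set at a time.

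Second, the induction does not close on the parameter $\lambda=1/f$. In stage three you shrink each $A_j$ to some $A_j'\subseteq A_j$; even if $|A_j'|\geq(1-\varepsilon)|A_j|$, a vertex $v\in A_i$ with $d(\{v\},A_j)\geq 1-1/f$ only satisfies $d(\{v\},A_j')\geq 1-\frac{1}{f(1-\varepsilon)}$, which is strictly worse than $1-1/f$. So the output of your $(\ell+1)$-step is not a $(c,1/f,w)$-structure but a $(c,1/(f(1-\varepsilon)),w)$-structure, and the inductive hypothesis no longer matches the conclusion. The fix is to carry $\lambda$ as a free parameter in the induction (prove the statement for every $\lambda>0$ with $c=c(\lambda,\ell)$) and apply the hypothesis with $\lambda'$ slightly smaller than the target, absorbing the degradation; but you should state and track this explicitly.
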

Let $(S_{1},...,S_{\mid w \mid})$ be a smooth $(c,\lambda ,w)$-structure of a tournament $T$, let $i \in \lbrace 1,...,\mid$$w$$\mid \rbrace$, and let $v \in S_{i}$. For $j\in \lbrace 1,2,...,\mid$$w$$\mid \rbrace \backslash \lbrace i \rbrace$, denote by $S_{j,v}$ the set of the vertices of $S_{j}$ adjacent from $v$ for $j > i$ and adjacent to $v$ for $j<i$.
\begin{lemma} \label{g}\cite{SZ} Let $0<\lambda<1$, $0<\gamma \leq 1$ be constants and let $w$ be a $\lbrace 0,1 \rbrace$$-$vector. Let $(S_{1},...,S_{\mid w \mid})$ be a smooth $(c,\lambda ,w)$-structure of a tournament $T$ for some $c>0$. Let $j\in \lbrace 1,...,\mid$$w$$\mid \rbrace$. Let $S_{j}^{*}\subseteq S_{j}$ such that $\mid$$S_{j}^{*}$$\mid$ $\geq \gamma \mid$$S_{j}$$\mid$, and let $A= \lbrace x_{1},...,x_{k} \rbrace \subseteq \displaystyle{\bigcup_{i\neq j}S_{i}}$ for some positive integer $k$. Then $\mid$$\displaystyle{\bigcap_{x\in A}S^{*}_{j,x}}$$\mid$ $\geq (1-k\frac{\lambda}{\gamma})\mid$$S_{j}^{*}$$\mid$. In particular $\mid$$\displaystyle{\bigcap_{x\in A}S_{j,x}}$$\mid$ $\geq (1-k\lambda)\mid$$S_{j}$$\mid$.
\end{lemma}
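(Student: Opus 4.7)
The plan is to reduce the problem to a one-vertex density bound and then to take a union bound over the $k$ elements of $A$. The key tool is Lemma~\ref{s}, which is precisely designed to transfer density information from a pair $(A_1,A_2)$ down to large subsets $(X,Y)$.

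First I would fix a single $x\in A$, say $x\in S_i$ with $i\neq j$. The smoothness of the structure $(S_1,\dots,S_{\mid w\mid})$ gives $d(\{x\},S_j)\geq 1-\lambda$ when $i<j$, and symmetrically $d(S_j,\{x\})\geq 1-\lambda$ when $i>j$; in either case at most $\lambda\mid S_j\mid$ vertices of $S_j$ are oriented the wrong way with respect to $x$. To push this estimate from $S_j$ down to the possibly much smaller subset $S_j^{*}$, I would apply Lemma~\ref{s} with $\eta_1=1$, $\eta_2=\gamma$, $X=\{x\}$, $Y=S_j^{*}$ (or the mirror configuration when $i>j$). This yields $d(\{x\},S_j^{*})\geq 1-\lambda/\gamma$, equivalently $\mid S_j^{*}\setminus S_{j,x}^{*}\mid\leq (\lambda/\gamma)\mid S_j^{*}\mid$, where $S_{j,x}^{*}$ denotes $S_j^{*}\cap S_{j,x}$.

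Second, I would take a union bound over the $k$ elements of $A$. Since the complement in $S_j^{*}$ of $\bigcap_{x\in A}S_{j,x}^{*}$ is the union of the $k$ ``bad'' sets $S_j^{*}\setminus S_{j,x}^{*}$, its size is at most $k(\lambda/\gamma)\mid S_j^{*}\mid$, which rearranges to the desired inequality $\mid\bigcap_{x\in A}S_{j,x}^{*}\mid\geq (1-k\lambda/\gamma)\mid S_j^{*}\mid$. The ``in particular'' statement is just the specialization $S_j^{*}=S_j$, so that $\gamma=1$ and the factor $\lambda/\gamma$ collapses to $\lambda$.

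I do not expect a real obstacle here: the content of the lemma is essentially the packaging of Lemma~\ref{s} together with a union bound, and the only point that requires a little care is the bookkeeping of the two orientation cases ($i<j$ versus $i>j$) in the definition of $S_{j,v}$, ensuring that the hypotheses ``$d(\{v\},A_j)\geq 1-\lambda$'' and ``$d(A_i,\{v\})\geq 1-\lambda$'' in the smooth structure are invoked symmetrically so that every $x\in A$ contributes the same loss $(\lambda/\gamma)\mid S_j^{*}\mid$.
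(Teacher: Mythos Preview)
Your proposal is correct and follows essentially the same approach as the paper: both arguments establish the single-vertex bound $\mid S_j^{*}\setminus S_{j,x}^{*}\mid\leq(\lambda/\gamma)\mid S_j^{*}\mid$ via Lemma~\ref{s} and then aggregate over the $k$ vertices of $A$. The only cosmetic difference is that the paper phrases the aggregation as an induction on $k$ using the inclusion--exclusion identity $\mid X\cap Y\mid=\mid X\mid+\mid Y\mid-\mid X\cup Y\mid\geq\mid X\mid+\mid Y\mid-\mid S_j^{*}\mid$, whereas you phrase it as a direct union bound on the complements; the two are equivalent.
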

\begin{proof}
The proof is by induction on $k$. without loss of generality assume that $x_{1} \in S_{i}$ and $j<i$. Since $\mid$$S_{j}^{*}$$\mid$ $\geq \gamma \mid$$S_{j}$$\mid$, then by Lemma \ref{s}, $d(S^{*}_{j},\lbrace x_{1}\rbrace) \geq 1-\frac{\lambda}{\gamma}$. So $1-\frac{\lambda}{\gamma} \leq d(S^{*}_{j},\lbrace x_{1}\rbrace) = \frac{\mid S^{*}_{j,x_{1}}\mid}{\mid S_{j}^{*}\mid}$. Then $\mid$$S^{*}_{j,x_{1}}$$\mid$ $\geq (1-\frac{\lambda}{\gamma})$$\mid$$S_{j}^{*}$$\mid$ and so true for $k=1$.
Suppose the statement is true for $k-1$.\\ $\mid$$\displaystyle{\bigcap_{x\in A}S^{*}_{j,x}}$$\mid$ $=\mid$$(\displaystyle{\bigcap_{x\in A\backslash \lbrace x_{1}\rbrace}S^{*}_{j,x}})\cap S^{*}_{j,x_{1}}$$\mid$ $= \mid$$\displaystyle{\bigcap_{x\in A\backslash \lbrace x_{1}\rbrace}S^{*}_{j,x}}$$\mid$ $+$ $\mid$$S^{*}_{j,x_{1}}$$\mid$ $- \mid$$(\displaystyle{\bigcap_{x\in A\backslash \lbrace x_{1}\rbrace}S^{*}_{j,x}})\cup S^{*}_{j,x_{1}}$$\mid$ $\geq (1-(k-1)\frac{\lambda}{\gamma})\mid$$S_{j}^{*}$$\mid$ $+$ $(1-\frac{\lambda}{\gamma})\mid$$S_{j}^{*}$$\mid$ $-$ $\mid$$S_{j}^{*}$$\mid$ $= (1-k\frac{\lambda}{\gamma})\mid$$S_{j}^{*}$$\mid$. $\hfill{\square}$       
\end{proof}
\section{Main Result}\label{result}
Let $H$ be a tournament, let $\theta =(1,...,t)$ be an ordering of its vertices, and let $2\leq r \leq t-5$. $H$ is called \textit{middle-pair-star} under $\theta =(1,...,t)$ if either $A_T(\theta )=\{(r+3,r),(r+4,r+1)\}\cup \{(r,i): i=1,...,r\}\cup \{(i,r+4): i=r+5,...,t\}$ or $A_T(\theta )=\{(r+3,r),(r+4,r+1)\}\cup \{(r+4,i): i=1,...,r\}\cup \{(i,r): i=r+5,...,t\}$. In this case we call the vertices $r,...,r+4$ the \textit{golden vertices of $H$}, $r,r+4$ the \textit{centers of $H$}, and $1,...,r-1,r+5,...,t$ the \textit{leaves of $H$}. $H$ is a \textit{left-pair-star} under $\theta$ if $A_H(\theta )=\{(4,1),(5,2)\}\cup \{(i,5):i=6,...,j\}\cup \{(i,1):i=j+1,...,t\}$ or $A_H(\theta )=\{(4,1),(5,2)\}\cup \{(i,1):i=6,...,j\}\cup \{(i,5):i=j+1,...,t\}$, where $6\leq j\leq t-1$. In this case we call the vertices $1,...,5$ the \textit{golden vertices of $H$}, $1,5$ the \textit{centers of $H$}, and $6,...,t$ the \textit{leaves of $H$}. \textit{Right-pair-stars} are defined similarly. If $H$ is a middle-pair-star (resp. right-pair-star) (resp. left-pair-star) under $\theta$, then $\theta$ is called \textit{star ordering of} $H$.
%\begin{figure}[h]
%	\centering
%	\includegraphics[width=1\linewidth]{pairstars}
%	\caption{Right-pair-stars on the right, middle-pair-stars on the middle, and left-pair-stars on the left. All pair stars are drawn under their star ordering.  All the non-drawn arcs are forward.}
%	\label{fig:pairstars}
%\end{figure}  
\\Let $T$ be a tournament drawn under an ordering $\theta = (v_1,...,v_n)$ of its vertices.  A \textit{pair-star $\mathcal{P}:=\{v_{i_1},...,v_{i_t}\}$ of $T$ under $\theta$} (where $i_1<...<i_t$) is an induced subtournament of $T$ with vertex set $\{v_{i_1},...,v_{i_t}\}$, such that: $(v_{i_1},...,v_{i_t})$ is its star ordering, the golden vertices are consecutive under $\theta$, and leaves incident to the same center are consecutive under $\theta$.\vspace{3mm}\\
A tournament $H$ is a \textit{path-galaxy} under an ordering $\theta$ of its vertices if $H$ is a galaxy under $\theta$, and the leaves of every star are consecutive under $\theta$. A tournament $H$ is a \textit{regular path-galaxy under $\theta$} if $H$ is a regular galaxy under $\theta$ which is a path-galaxy, and moreover all the stars of $H$ under $\theta$ are of the same size. 
Let H be a tournament and let $\theta$ be an ordering of its vertices such that $V (H)$ is the disjoint
union of $V (S_1)$, ...,$V (S_l), X$ under $\theta$, where $S_1, ..., S_l$ are the pair-stars of $H$ under $\theta$, $H$$\mid$$X$ is a path-galaxy under $\theta_{X}$ where $\theta_{X}$ is the restriction of $\theta$ to $X$, leaves of every star of $H|X$ under $\theta_{X}$ are also consecutive under $\theta$,  and for any given
$1 \leq i < j \leq l$ every vertex of $V(S_i) $ is before every vertex of $V (S_j)$ under $\theta$. We call this ordering a spiral galaxy ordering of $H$ and we call $H$ a \textit{spiral galaxy under $\theta$}. Let $Q_1,...,Q_t$ be the stars of $H$$\mid$$X$ under $\theta_{X}$. If moreover all the following are satisfied:\vspace{2.5mm}\\
$\bullet$ Centers of $S_i$ are incident to  same number of leaves for $i=1,...,l$ and $|S_i|=|S_j|$ for all $i\neq j$.\vspace{1.5mm}\\
$\bullet$ $t=l$ and $2|Q_i|= |S_i|-3$ for $i=1,...,l$,\vspace{2mm}\\  then we call $H$ a \textit{uniform spiral galaxy under} $\theta$. \vspace{4mm}\\ 
In this section  we start by proving a structural property of $\alpha$-coherent tournaments. Then we prove that every spiral galaxy has the strong EH-property.

\begin{lemma} \label{q}
Let $c,\alpha > 0$ be constants, where $\alpha\leq \frac{c}{m+1}$. Let $T$ be an $\alpha $-coherent tournament with $\mid$$T$$\mid$ $ =n$, and   let $A,B_{1},...,B_{m}$ be $m$ vertex disjoint subsets of $V(T)$ with $ \mid $$A$$ \mid$ $ \geq cn$ and $ \mid $$B_{i}$$ \mid$ $\geq cn$ for $i=1,...,m$. 
Then there exist vertices $b_{1},...,b_{m},a$ such that $a\in A$, $b_{i}\in B_{i}$ for $i=1,...,m$, and $\lbrace a \rbrace$ is complete to $\lbrace b_{1},...,b_{m} \rbrace$. Similarly there exist vertices $u_{1},...,u_{m},r$ such that $r\in A$, $u_{i}\in B_{i}$ for $i=1,...,m$, and $\lbrace r \rbrace$ is complete from $\lbrace u_{1},...,u_{m} \rbrace$. Similarly there exist vertices $s_{1},...,s_{t},...,s_m,r$ such that $r\in A$, $s_{i}\in B_{i}$ for $i=1,...,m$, and $\{s_1,...,s_t\}\leftarrow\lbrace r \rbrace\leftarrow \{s_{t+1},...,s_m\}$.
\end{lemma}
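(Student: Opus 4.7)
The three assertions are in fact special cases of a single statement: the third one, with $t=m$, recovers the first, and with $t=0$ recovers the second. I therefore plan to prove only the ``mixed'' third statement; the other two drop out immediately. The argument is a single averaging step played against the $\alpha$-coherence hypothesis.

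Fix $t\in\{0,1,\dots,m\}$. For each $i$ I will define a ``bad'' set $A_i^{\mathrm{bad}}\subseteq A$ of vertices that fail to have the desired neighbor in $B_i$, and bound its size through a forced pure pair. Concretely, for $i\leq t$ set $A_i^{\mathrm{bad}}:=\{v\in A:B_i\to v\}$ (vertices with no out-neighbor in $B_i$), and for $i>t$ set $A_i^{\mathrm{bad}}:=\{v\in A:v\to B_i\}$ (vertices with no in-neighbor in $B_i$). In the first case $(B_i,A_i^{\mathrm{bad}})$ is a pure pair of $T$; in the second $(A_i^{\mathrm{bad}},B_i)$ is a pure pair of $T$. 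The $\alpha$-coherence of $T$ gives $\min(|B_i|,|A_i^{\mathrm{bad}}|)<\alpha n$ in either case, and since $|B_i|\geq cn>\alpha n$ (using $\alpha\leq c/(m+1)<c$), the minimum must be $|A_i^{\mathrm{bad}}|$, so $|A_i^{\mathrm{bad}}|<\alpha n$.

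Summing over $i$ and invoking the hypothesis $\alpha\leq c/(m+1)$,
\[
\Bigl|\,\bigcup_{i=1}^{m}A_i^{\mathrm{bad}}\,\Bigr|\;<\;m\alpha n\;\leq\;\frac{m}{m+1}\,cn\;<\;cn\;\leq\;|A|,
\]
so there is a vertex $r\in A\setminus\bigcup_{i=1}^{m}A_i^{\mathrm{bad}}$. By construction, for every $i\leq t$ the vertex $r$ has some out-neighbor $s_i\in B_i$, and for every $i>t$ the vertex $r$ has some in-neighbor $s_i\in B_i$. This proves the third claim, and specialization yields the first ($t=m$) and the second ($t=0$).

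There is no real obstacle here beyond keeping the constants straight: each $B_i$ costs one ``excluded'' slice of $A$ of size less than $\alpha n$, and the numerical hypothesis $\alpha\leq c/(m+1)$ is precisely the margin that keeps the union of the $m$ excluded slices strictly smaller than the lower bound $cn$ on $|A|$, so a good choice of $r$ survives.
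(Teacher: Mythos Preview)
Your proof is correct and follows essentially the same approach as the paper: both define, for each $i$, the ``bad'' subset of $A$ consisting of vertices with the wrong orientation to all of $B_i$, bound each such set by $\alpha n$ via $\alpha$-coherence (since it forms one side of a pure pair with $B_i$), and then use the union bound together with $\alpha\le c/(m+1)$ to leave a surviving vertex in $A$. Your presentation is in fact slightly tidier, since you prove the mixed case directly and read off the two one-sided statements as the special cases $t=m$ and $t=0$, whereas the paper proves only the first case and appeals to symmetry for the rest.
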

\begin{proof}
We will prove only the first statement  because the latter can be proved analogously.
Let $A_{i} \subseteq A$ such that $A_{i}$ is complete from $B_{i}$ for $i = 1,...,m$.
Since $T$ is $ \alpha $-coherent, then $\mid$$A_i$$\mid < \alpha n$, and so $A^*:=$ $\mid$$A\backslash (\bigcup_{i=1}^{m}A_{i})$$\mid \geq cn-m\alpha n\geq (m+1)\alpha n- m\alpha n= \alpha n$. This implies that $A^*$ is non-empty. Fix $a\in A$.  So there exist vertices $b_{1},...,b_{m}$ such that $\lbrace a \rbrace$ is complete to $\lbrace b_{1},...,b_{m} \rbrace$. $\hfill {\square}$ 
\end{proof}
\vspace{5mm}\\ The following well-known theorem will be very useful in our latter analysis:
\begin{theorem}\cite{stearns}\label{transitive}
Every tournament on $2^{k-1}$ vertices contains a transitive subtournament of size at least $k$.
\end{theorem}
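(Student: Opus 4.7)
The plan is to prove this by induction on $k$, exploiting the pigeonhole principle applied to the out-neighborhood and in-neighborhood of an arbitrary vertex. The key observation is that in any tournament on $n$ vertices, every vertex $v$ splits the remaining $n-1$ vertices into its out-neighborhood $N^+(v)$ and its in-neighborhood $N^-(v)$, so at least one of these has size $\geq \lceil (n-1)/2 \rceil$. When $n = 2^{k-1}$, this gives a set of size $\geq 2^{k-2}$ on which we can invoke the inductive hypothesis.

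The base case $k=1$ is trivial: a single vertex is a transitive subtournament of size $1$, and any tournament on $2^{0}=1$ vertex has one vertex. For the inductive step, assume the statement holds for $k-1$, and let $T$ be a tournament on $2^{k-1}$ vertices. Fix any vertex $v \in V(T)$. Then one of $N^+(v)$, $N^-(v)$ has cardinality at least $\lceil (2^{k-1}-1)/2 \rceil = 2^{k-2}$.

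Without loss of generality assume $|N^+(v)| \geq 2^{k-2}$ (the other case is symmetric). By the inductive hypothesis applied to the subtournament induced on $N^+(v)$, there is a transitive subtournament on vertex set $\{u_1, \ldots, u_{k-1}\} \subseteq N^+(v)$ with $u_i \to u_j$ whenever $i < j$. Since $v \to u_i$ for every $i$ (because each $u_i \in N^+(v)$), the vertex set $\{v, u_1, \ldots, u_{k-1}\}$ ordered as $(v, u_1, \ldots, u_{k-1})$ forms a transitive subtournament of size $k$, completing the induction. In the symmetric case $|N^-(v)| \geq 2^{k-2}$, we instead append $v$ at the end of the transitive ordering produced inside $N^-(v)$.

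There is no real obstacle here — the argument is entirely self-contained and relies only on elementary counting and induction. The only subtlety worth double-checking is that $\lceil (2^{k-1}-1)/2 \rceil$ is indeed $\geq 2^{k-2}$ for $k \geq 2$, which is immediate since $2^{k-1} - 1 \geq 2^{k-1} - 1 \geq 2 \cdot 2^{k-2} - 1$, hence the ceiling equals $2^{k-2}$.
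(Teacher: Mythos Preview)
Your proof is correct and is exactly the classical induction argument due to Stearns. Note that the paper itself does not supply a proof of this theorem at all: it is stated with a citation to \cite{stearns} and then used as a black box in the proof of Theorem~\ref{mm}. So there is no ``paper's own proof'' to compare against; your argument is the standard one and fills in what the paper leaves to the reference.
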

An ordering $\alpha$ of the vertex set of a transitive tournament $S$ is called \textit{transitive ordering of $S$} if $A_S(\alpha)=\phi$. Let $\alpha^{'}$ be an ordering of $V(S)$. Define the \textit{transitive operation} to be the permutation of vertices that transforms $\alpha^{'}$ to $\alpha$  (note possibly $\alpha =\alpha^{'}$).\vspace{4mm}\\
 Let $H$ be a regular path-galaxy tournament under an ordering $\theta = (u_1,...,u_h)$ of its vertices. Let $Q_1,...,Q_l$ be the stars of $H$ under $\theta$. 
 Let $w$ be an all-zero vector. We say that a smooth $(c,\lambda ,w)$-structure of a tournament $T$ \textit{corresponds}  \textit{to $H$} if  $\mid$$w$$\mid =h$. Define $\Pi_{H}(\theta)$ to be the set of orderings of $H$ that can be transformed to $ \theta$  by applying the transitive operation to the leaves of all stars of $H$.

Let $\chi :=(A_{1},...,A_{\mid w \mid})$ be a smooth $(c,\lambda ,w)$-structure in a tournament $T$ that corresponds to $H$. We say that $H$ is \textit{well-contained in} $\chi$ if there exists $x_i\in A_i$ for $i=1,...,\mid$$w$$\mid$ such that $T$$\mid$$\{x_1,...,x_{\mid w\mid }\}$ is isomorphic to $H$ and $(x_1,...,x_{\mid w \mid })$ is one of the orderings of $H$ in $\Pi_{H}(\theta)$.
%Define the \textit{reverse operation} to be the permutation of the vertices $s_1,...,s_p$ that transforms the ordering $(s_1,...,s_p)$ to the ordering $(s_p ,s_{p -1},...,s_2,s_1)$. Note the if $\theta =(v_1,...,v_j,...,v_{j+t},...,v_n)$ is an ordering of the vertex set of an $n$-vertex tournament, then applying the reverse operation to the vertices $v_j,...,v_{j+t}$ transforms $\theta$ to $(v_1,...,v_{j-1},v_{j+t},v_{j+t-1},...,v_{j+1},v_j,v_{j+t+1},...,v_n)$.    

\begin{theorem}\label{mm}
Every spiral galaxy has the strong EH-property.
\end{theorem}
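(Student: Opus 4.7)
The plan is to prove the contrapositive: for an appropriate $\alpha=\alpha(H)>0$, no $H$-free tournament is $\alpha$-coherent. Since the strong EH-property is monotone under induced subtournaments (an $H$-free tournament is automatically $H'$-free when $H\subseteq H'$, so any constant that works for $H'$ works for $H$), it suffices to establish the theorem for uniform spiral galaxies: every spiral galaxy is embeddable as an induced subtournament in a uniform one obtained by padding pair-stars with extra leaves (equalising the centre-leaf counts and making all $|S_i|$ equal) and enlarging/adding path-galaxy stars so that $t=l$ and $2|Q_i|=|S_i|-3$. I expect this reduction to be essentially bookkeeping.

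\textbf{Main construction.} Let $H$ be a uniform spiral galaxy on $h$ vertices under a spiral galaxy ordering $\theta$, and suppose for contradiction that $T$ is $H$-free with $|T|=n$ and $\alpha$-coherent for $\alpha\le c/(h+1)$, where $c$ comes from applying Theorem~\ref{t} to $S=H$ with the all-zero vector $w$ of length $h$. This gives a smooth $(c,1/h,w)$-structure $\chi=(A_1,\ldots,A_h)$ in $T$, and the goal is to find $x_i\in A_i$ for which $T\mid\{x_1,\ldots,x_h\}$ is isomorphic to $H$ through an ordering in the spiral-galaxy analogue of $\Pi_H(\theta)$, contradicting $H$-freeness. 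Maintain active subsets $A'_i\subseteq A_i$, initially $A'_i=A_i$, and process the stars of $H$ in reverse order under $\theta$: first the path-galaxy stars $Q_t,\ldots,Q_1$, then the pair-stars $S_l,\ldots,S_1$. For each path-galaxy star, which is a left, right, or middle star, a single application of the first, second, or third statement of Lemma~\ref{q} picks the centre together with all its leaves in one shot, realising all the backward arcs of that star at once. After each such step, shrink the remaining $A'_j$'s via Lemma~\ref{g} to those vertices realising the correct default (forward) adjacency with the newly chosen vertices.

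\textbf{Pair-stars and main obstacle.} The core difficulty is each pair-star $S_i$. Its five golden vertices include two centres $r$ and $r+4$ whose leaves live on opposite sides of $\theta$, together with two internal backward arcs $(r+3,r)$ and $(r+4,r+1)$; no single call to Lemma~\ref{q} produces all of these adjacencies at once. My plan is to invoke Lemma~\ref{q} twice per pair-star. A first call with $A=A'_r$ and $B$-sets equal to the left-leaf sets $A'_1,\ldots,A'_{r-1}$ together with $A'_{r+3}$ picks $x_r$ complete to the left leaves and adjacent from $x_{r+3}$, thereby realising both the left-leaf backward arcs and the internal arc $(r+3,r)$ simultaneously. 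A symmetric second call with $A=A'_{r+4}$ and $B$-sets equal to the right-leaf sets $A'_{r+5},\ldots$ together with $A'_{r+1}$ realises the right-leaf backward arcs and the internal arc $(r+4,r+1)$. The remaining vertex $x_{r+2}$ has no backward arcs and is picked arbitrarily from $A'_{r+2}$, which is nonempty by Lemma~\ref{g}. The uniformity conditions $t=l$ and $2|Q_i|=|S_i|-3$ are tailored precisely so that the blocks of linear sets indexed by the interior golden vertices of pair-stars match the sets indexed by the path-galaxy leaves through the successive restrictions. The most delicate bookkeeping, and what I expect to be the hardest part, is pinning down a single $\alpha=\alpha(h)$ small enough that all $O(h)$ applications of Lemma~\ref{q} succeed sequentially without any $A'_j$ shrinking below the required threshold, and then verifying that the final assembly really is isomorphic to $H$ through an ordering in the analogue of $\Pi_H(\theta)$; this last verification is where the flexibility to permute leaves transitively within each star's leaf block will be crucial.
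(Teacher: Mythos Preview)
There is a genuine gap in your argument. When you apply Lemma~\ref{q} with a centre set $A$ and leaf sets $B_1,\dots,B_m$, the lemma guarantees the centre--leaf arcs but says nothing about the arcs among the $b_i$'s themselves. Thus the $b$ leaves you pick for a path-galaxy star (or the $r-1$ left leaves you pick for a pair-star) may well contain a directed $3$-cycle. In that case no permutation of them produces a transitive block, so the ``flexibility to permute leaves transitively'' that you invoke at the end is unavailable: the transitive operation presupposes that the block is already transitive. The same problem hits the first call in your pair-star scheme a second time: the arcs between the left leaves and the simultaneously chosen $x_{r+3}$ are also uncontrolled, so you cannot even guarantee the forward arcs $x_i\to x_{r+3}$ for $i<r$.

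The paper repairs exactly this point by oversampling. Instead of a structure of length $h$, it takes one of length roughly $l\,2^{b}$ times larger; for each leaf block it applies Lemma~\ref{q} to obtain $\rho=2^{b-1}$ candidate leaves, and then invokes Theorem~\ref{transitive} (Stearns) to extract a transitive subset of size $b$. This is done both for the pair-stars (Claim~1) and for the path-galaxy stars (Claim~3). The residual difficulty with the uncontrolled arcs between a leaf block and the neighbouring golden vertex (your $x_{r+3}$ issue) is not resolved by a clever ordering of calls; the paper instead allows those arcs to go either way and performs a case analysis: if they happen to be forward the pair-star is realised directly, and otherwise one extracts a $3$-vertex left star among the chosen vertices and compensates by re-reading $H$ under an alternative ordering in which the golden block is permuted (Claim~2). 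The uniformity hypotheses $t=l$ and $2|Q_i|=|S_i|-3$ are used to make this compensation possible, not merely for indexing alignment. Your scheme cannot be completed without incorporating both the oversampling/Stearns step and some mechanism to handle the uncontrolled golden--leaf arcs.
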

\begin{proof}
Let $H$ be a spiral galaxy tournament under an ordering $\theta = (u_1,...,u_h)$ of its vertices. We can assume that $H$ is a uniform spiral galaxy, since every spiral galaxy is a subtournament of a uniform spiral galaxy.   Let $S_1,...,S_l$ be the pair-stars  of $H$ under $\theta$, such that $|S_i|=2b+5$. Let $T$ be an $H$-free tournament on $n$ vertices.  Let $\xi = 2^b+5$ and let $\nu =3l(2^{b-1}+1)$. Let $w:= (0,...,0)$ with $\mid$$w$$\mid$ $= \nu(l\xi +1)+l\xi$ be an all-zero vector. Theorem \ref{t} implies that there exists $c>0$ and a smooth $(c,\lambda ,w)$-structure $(A_1,...,A_{\mid w\mid })$ denoted by $\chi$, with $\lambda =\frac{1}{h}$.  We are going to prove that there exists a pure pair $(A,B)$ in $T$ with $\mathcal{O}(A,B)\geq \alpha cn$, where $\alpha = min\{\frac{1}{30(2^{b-1}+2)},\frac{b+1}{h(2^{b-1}+1)}\}$. Assume that the contrary is true. Then $\mathcal{P}(T)<\alpha n$. So $T$ is $\alpha$-coherent. For $k \in \lbrace 1,...,l \rbrace$, define $H^{k} = H$$\mid$$\bigcup_{j=1}^{k} V(S_{j})$.   Select the sets $A_{i_1},...,A_{i_{l\xi}}$ in $\chi$, such that $i_1=\nu +1$, and $i_{j+1}-i_{j}=\nu +1$ for all $j\in \{1,...,l\xi -1\}$. Rename $A_{i_1},...,A_{i_{l\xi}}$ by $W_1,...,W_{l\xi}$. Clearly $\tilde{\chi}=(W_1,...,W_{l\xi})$ is a smooth $(c,\lambda ,\tilde{w})$-structure, where $\mid$$\tilde{w}$$\mid$ $=l\xi$. Let $\theta_{k}:=(u_{k_1},...,u_{k_{q_k}})$ be the restriction of $\theta$ to $V(H^k)$, where $k\in\{1,...,l\}$ and $q_k=k(2b+5)=\mid$$H^k$$\mid$. For $k=l$, let $q=q_l$. Rename now the vertices in the ordering $\theta_{k}$ by $v_1,...,v_{q_k}$ (that is $\theta_{k}:=(u_{k_1},...,u_{k_{q_k}})=(v_1,...,v_{q_k})$). Clearly $H^k$ is a nebula under $\theta_{k}$. Let $R_1,....,R_{2l}$ be the stars of $H^{l}$ under $\theta_{l}$, and let $L_1,...,L_{2l}$ be the set of leaves of $R_1,....,R_{2l}$ respectively. For all $i\in \{1,...,2l\}$, let $a_i\in L_i$ be a golden vertex of some pair star. Define $L_i':= L_i\backslash \{a_i\}$ and $E_i$ to be the set of arcs between $L_i'$ and $\{a_i\}$ for $i=1,...,2l$.  Let $\mathcal{H}^k$ be the digraph obtained from $H^k$ by deleting all the arcs  in  $\displaystyle{\bigcup_{i=1}^{2k}}A(H^k|L_i)$. When $k=l$, write $\mathcal{H}$ instead of $\mathcal{H}^l$. Let us call $\theta_{k}$ the \textit{forest ordering} of $\mathcal{H}^k$ and for $i=1,...,2l$, call $L_i'$ a set of \textit{sister leaves} of $\mathcal{H}^k$ under $\theta_{k}$ (we call them sister leaves because they are consecutive leaves under $\theta$). 
\begin{claim}\label{cl1}
%Let $\Theta_{\mathcal{H}}(\theta_{l}):=\{\alpha : \alpha$ is obtained from $\theta_{l}$ by performing the reverse operation to some sister leaves under $\theta_{l}\}$. 
For some $j_1,...,j_q$ with $1\leq j_1<...<j_q \leq l\xi$, there exist vertices $a_i\in W_{j_i}$ for $i=1,...,q$, such that \begin{itemize}
\item $T':=T$$\mid$$\{a_1,...,a_{q}\}$ contains a copy of $\mathcal{H}$, denoted by $\mathcal{H}'$, and
\item $(a_1,...,a_{q})$ is the forest ordering of $\mathcal{H}$, and
\item every subtournament in $T'$ induced by a set of sister leaves of $\mathcal{H}'$ under  $(a_1,...,a_{q})$, is a transitive subtournament. 
\end{itemize} 
%all sister leaves are consecutive under $(a_1,...,a_{q})$, and the ordering obtained from $(a_1,...,a_q)$ after applying the transitive operation to all sister leaves, is the forest ordering of $\mathcal{H}$. 
%and $(y_1,...,y_{q})$ is one of the orderings in  $\Theta_{\mathcal{H}}(\theta_{l})$ of $\mathcal{H}$. 
\end{claim}
\noindent \sl {Proof of Claim \ref{cl1}. }\upshape 
Assume that $S_1$ is a middle-pair-star and $\{v_1,...,v_b\}\leftarrow v_{b+1}$ (else the argument is similar and we omit it). Let $\rho =2^{b-1}$. As $\alpha < \frac{c}{\rho +2}$ and $W_{i}\geq cn$ for $i=1,...,\mid$$\tilde{w}$$\mid$, Lemma \ref{q} implies that there exist $y_i\in W_i$ for $i=1,...,\rho+1,\rho +4$, such that $\{y_1,...,y_{\rho }\}\leftarrow y_{\rho +1}\leftarrow y_{\rho +4}$. By Theorem \ref{transitive}, $T|\{y_1,...,y_{\rho }\}$ contains a transitive subtournament of size $b$. Let $T_1:=T|\{y_{j_1},...,y_{j_b}\}$ be this transitive subtournament, with $1 \leq j_1<...<j_b\leq \rho$. Let $W_{\rho +2}^*=\{y\in W_{\rho +2}:V(T_1)\cup\{y_{\rho +1}\}\rightarrow y \rightarrow y_{\rho +4}\}$ and let $W_i^*=\{y\in W_i:V(T_1)\cup\{y_{\rho +1},y_{\rho +4}\}\rightarrow y\}$ for $i=\rho +5,...,\xi$. Lemma \ref{g} implies that $\mid$$W_i^*$$\mid$ $\geq (1-\frac{b+2}{h})\mid$$W_i$$\mid$ $\geq \frac{\mid W_i\mid}{2}\geq\frac{c}{2}n$ for $i=\rho +2,\rho +5,...,\xi$. Now since $\alpha < \frac{c}{2(\rho +2)}$ and $W^*_{i}\geq \frac{c}{2}n$ for $i=\rho +2,\rho +5,...,\xi$, then by Lemma \ref{q}, there exist $y_i\in W^*_i$ for $i=\rho +2,\rho +5,...,\xi$, such that $y_{\rho +2}\leftarrow y_{\rho +5}\leftarrow\{y_{\rho +6},...,y_{\xi}\}$. By Theorem \ref{transitive}, $T|\{y_{\rho +6},...,y_{\xi}\}$ contains a transitive subtournament of size $b$. Let $T_2:=T|\{y_{j_{b+6}},...,y_{j_{2b+5}}\}$ be this transitive subtournament. Let $W_{\rho +3}^*=\{y\in W_{\rho +3}: V(T_1)\cup\{y_{\rho +1},y_{\rho +2}\}\rightarrow y\rightarrow \{y_{\rho +4},y_{\rho +5}\}\cup V(T_2)\}$. Similarly, we prove that $W^*_{\rho +3}\geq \frac{c}{3}n$, and so $W^*_{\rho +3}\neq\phi$. Fix $y_{\rho +3}\in W_{\rho +3}^*$. So $T$$\mid$$\{y_{j_1},...,y_{j_b},y_{\rho +1},...,y_{\rho +5},y_{j_{b+6}},...,y_{j_{2b+5}}\}$ contains a copy of $\mathcal{H}^l$$\mid$$V(S_1)$. Denote this copy by $\mathcal{S}_1$. Rename $y_{j_1},...,y_{j_b},y_{\rho +1},...,y_{\rho +5},y_{j_{b+6}},...,y_{j_{2b+5}}$ by $a_1,...,a_{2b+5}$ respectively. Clearly $(a_1,...,a_{2b+5})$ is the forest ordering of $\mathcal{H}^l$$\mid$$V(S_1)$, and $T_1$ and $T_2$ are transitive subtournaments of $T$. If $l=1$, we are done. So let us assume that $l\geq 2$.
% Note that applying the transitive opperation to $(a_1,...,a_b)$ and $(a_{b+6},...,a_{2b+5})$, transfors $(a_1,....,a_{2b+5})$ to the forest ordering of $\Gamma_{1}$, say $(a_{k_1},...,a_{k_b},a_{b+1},...,a_{b+5}, $.
 
  Fix $k\in \{1,...,l-1\}$ and let $\delta = k(2b+5)$. Assume that there exist vertices  $a_i\in W_{j_i}$ for $i=1,...,\delta$ with $1\leq j_1<...<j_{\delta}\leq k\xi$, such that $T|\{a_1,...,a_{\delta}\}$ contains a copy of $\mathcal{H}^k$ denoted by $\mathcal{H}^{k'}$, $(a_1,...,a_{\delta})$ is the forest ordering of $\mathcal{H}^k$, and every subtournament of $T$ induced by a set of sister leaves of $\mathcal{H}^{k'}$ under $(a_1,...,a_{\delta})$ is a transitive subtournament. Let $t=k+1$ and for $i= k\xi +1,...,k\xi +\xi$, let $W_i^*=\{y\in W_i:\{a_1,...,a_{\delta}\}\rightarrow y\}$. By Lemma \ref{g}, $\mid$$W_i^*$$\mid$ $\geq (1-\frac{\delta}{h})\mid$$W_i$$\mid$ $\geq (1-\frac{1}{2})\mid$$W_i$$\mid$ $\geq \frac{\mid W_i\mid}{2}\geq\frac{c}{2}n $ if $l=2$, $\mid$$W_i^*$$\mid$ $\geq \frac{\mid W_i\mid}{3}\geq\frac{c}{3}n $ if $l=3$, and $\mid$$W_i^*$$\mid$ $\geq (1-\frac{\delta}{h})\mid$$W_i$$\mid$ $\geq (1-\frac{4}{5})\mid$$W_i$$\mid$ $\geq \frac{\mid W_i\mid}{5}\geq\frac{c}{5}n$ otherwise. Assume that $S_t$ is a right-pair-star and assume that $\{v_{\delta+1},...,v_{\delta +b}\}\leftarrow v_{\delta +2b+1}$ (else the argument is similar, and we omit it). Let $Y_1:=\{k\xi +1,...,k\xi +\rho ,k\xi +2^b+1,k\xi +2^b+4\}$. Since $\alpha < \frac{c}{5(\rho +2)}$ and for all $i\in Y_1$, $W^*_{i}\geq \frac{c}{5}n$, then Lemma \ref{q} implies that for all $i\in Y_1$, there exist $y_i\in W^*_i$, such that $\{y_{k\xi +1},...,y_{k\xi +\rho}\}\leftarrow y_{k\xi +2^b+1}\leftarrow y_{k\xi +2^b+4}$. By Theorem \ref{transitive}, $T|\{y_{k\xi +1},...,y_{k\xi +\rho}\}$ contains a transitive subtournament of size $b$.
   Let $T_{2k+1}:=T|\{y_{j_{\delta +1}},...,y_{j_{\delta +b}}\}$ be this transitive subtournament, with $k\xi +1 \leq j_{\delta +1}<...<j_{\delta +b}\leq k\xi +\rho$. Let $Y_2:= \{k\xi +\rho +1,...,k\xi +2^b,k\xi +2^b+2,k\xi +\xi\}$. For all $i\in Y_2$, define $W_i^{**}:=\displaystyle{\bigcap_{x\in A}W^*_{i,x}}$, where $A=V(T_{2k+1})\cup \{y_{k\xi +2^b+1},y_{k\xi +2^b+4}\}$. Then by Lemma \ref{g}, for all $i\in Y_2$, $\mid$$W_i^{**}$$\mid$ $\geq (1-\frac{5(b+2)}{h})\mid$$W_i^*$$\mid$ $\geq \frac{\mid W_i^*\mid}{6}\geq \frac{c}{30}n$. 
   So as $\alpha < \frac{c}{30(\rho +2)}$, then Lemma \ref{q} implies that for all $i\in Y_2$, there exist vertices $y_i\in W_i^{**}$, such that  $\{y_{k\xi +\rho +1},...,y_{k\xi +2^b},y_{k\xi +2^b+2}\}\leftarrow y_{k\xi +\xi}$. By Theorem \ref{transitive}, $T|\{y_{k\xi +\rho +1},...,y_{k\xi +2^b}\}$ contains a transitive subtournament of size $b$. Let $T_{2k+2}:=T|\{y_{j_{\delta +b+1}},...,y_{j_{\delta +2b}}\}$ be this transitive subtournament, with $k\xi +\rho +1 \leq j_{\delta +b+1}<...<j_{\delta +2b}\leq k\xi +2^b$.
    Now for $i=k\xi +2^b+3$, let $W_{i}^{**}=\displaystyle{\bigcap_{x\in B}W^*_{i,x}}$, where $B=V(T_{2k+1})\cup V(T_{2k+2})\cup\{y_{k\xi +2^b+1},y_{k\xi +2^b+2},y_{k\xi +2^b+4},y_{k\xi +2^b+5}\}$. Then by Lemma \ref{g}, $\mid$$W_{i}^{**}$$\mid$ $\geq (1-\frac{2}{3})\mid$$W_{i}^*$$\mid$ $\geq \frac{\mid W_{i}^*\mid}{3}\geq\frac{\mid W_{i}\mid}{6}$ if $l=2$, $\mid$$W_{i}^{**}$$\mid$ $\geq \frac{2\mid W_{i}^*\mid}{3}\geq\frac{2\mid W_{i}\mid}{9}$ if $l=3$, and $\mid$$W_{i}^{**}$$\mid$ $\geq \frac{\mid W_{i}^*\mid}{6}\geq\frac{\mid W_{i}\mid}{30}$ if $l\geq 4$.
     Then $W_{i}^{**}\neq\phi$. Fix $y_{i}\in W_{i}^{**}$ for $i=k\xi +2^b+3$. Rename $y_{j_{\delta +1}},...,y_{j_{\delta +2b}}, y_{k\xi +2^b+1},...,y_{k\xi +2^b+5}$ by $a_{\delta +1},...,a_{t(2b+5)}$ respectively. Now by merging $G_1:=T$$\mid$$\{a_1,....,a_{\delta}\}$ with $G_2:=T$$\mid$$\{a_{\delta +1},...,a_{t(2b+5)}\}$, $G_1\cup G_2$ contains a copy of $\mathcal{H}^t$ and $(a_1,...,a_{t(2b+5)})$ is its forest ordering. Also note that every subtournament in $G_1\cup G_2$ induced by a set of sister leaves of $\mathcal{H}^{t'}$ under  $(a_1,...,a_{t(2b+5)})$, is a transitive subtournament. \\
Now applying this algorithm for $t=2,...,l$ by turn, completes the proof.  $\hfill {\lozenge }$\medbreak
  \noindent Let $\varphi :=(a_1,...,a_{l(2b+5)})$ and let $\gamma_{i} :=(a_{i_1},...,a_{i_{2b+5}})$ be the forest ordering of the copy $\mathcal{S}_i$ of $\mathcal{H}$$\mid$$V(S_i)$ in $T$. Assume first that $S_i$ is a middle-pair-star. We know that $\{a_{i_1},...,a_{i_b}\}$ is adjacent from $p^i_1\in\{a_{i_{b+1}},a_{i_{b+5}}\}$. Let $p^i_2\in \{a_{i_{b+1}},a_{i_{b+5}}\}\backslash \{p^i_1\}$. Note that $p^i_1$ and $p^i_2$ are distinct. For $j=1,2$, let $q^i_j\in \{a_{i_{b+2}},a_{i_{b+4}}\}$, such that $p^i_jq^i_j\in B(\mathcal{S}_i,\gamma )$.  If $\{a_{i_1},...,a_{i_b} \}\rightarrow q^i_1$ and $q^i_2\rightarrow \{a_{i_{b+6}},...,a_{i_{2b+5}} \}$, then do nothing. Otherwise: If $p_1^i=a_{i_{b+1}}$, then there exist $a^1\in\{a_{i_1},...,a_{i_b},a_{i_{b+2}}\}$, $a^2\in\{a_{i_{b+1}},a_{i_{b+5}}\}$, $a^3\in\{a_{i_{b+4}},a_{i_{b+6}},...,a_{i_{2b+5}}\}$, such that $a^1$ is adjacent from $\{a^2,a^3\}$ and $a^2$ is adjacent from $a^3$. And if $p_1^i=a_{i_{b+5}}$, then there exist $a^1\in\{a_{i_1},...,a_{i_{b+1}}\}$, $a^2\in\{a_{i_{b+2}},a_{i_{b+4}}\}$, $a^3\in\{a_{i_{b+5}},...,a_{i_{2b+5}}\}$, such that $a^1$ is adjacent from $\{a^2,a^3\}$ and $a^2$ is adjacent from $a^3$. In this case    delete $\{a_{i_1},...,a_{i_{2b+5}}\}\backslash \{a^1,a^2,a^3\}$ from $\varphi$. Assume now that $S_i$ is a right-pair-star. Let $p^i_1\in \{a_{i_{2b+1}},a_{i_{2b+5}}\}$ such that $\{a_{i_1},...,a_{i_b}\}\leftarrow p_1^i$ and let $p^i_2\in \{a_{i_{2b+1}},a_{i_{2b+5}}\}\backslash \{p^i_1\}$. For $j=1,2$, let $q^i_j\in \{a_{i_{2b+2}},a_{i_{2b+4}}\}$, such that $p^i_jq^i_j\in B(\mathcal{S}_i,\gamma_{i} )$. If $\{a_{i_1},...,a_{i_b} \}\rightarrow q^i_1$ and $\{a_{i_{b+1}},...,a_{i_{2b}} \}\rightarrow q^i_2$, then do nothing. Else there exist three vertices $z^1\in\{a_{i_1},...,a_{i_{2b}}\}$, $z^2\in\{a_{i_{2b+1}},a_{i_{2b+2}}\}$, $z^3\in\{a_{i_{2b+4}},a_{i_{2b+5}}\}$, such that $z^1$ is adjacent from $\{z^2,z^3\}$ and $z^2$ is adjacent from $z^3$. In this case    delete $\{a_{i_1},...,a_{i_{2b+5}}\}\backslash \{z^1,z^2,z^3\}$ from $\varphi$. Finally assume that $S_i$ is a left-pair-star. Let $p^i_1\in \{a_{i_1},a_{i_5}\}$ such that $p_1^i\leftarrow \{a_{i_6},a_{i_{b+5}}\}$ and let $p^i_2\in \{a_{i_1},a_{i_5}\}\backslash \{p^i_1\}$. For $j=1,2$, let $q^i_j\in \{a_{i_2},a_{i_4}\}$, such that $p^i_jq^i_j\in B(\mathcal{S}_i,\gamma_{i} )$. If $q_1^i\rightarrow \{a_{i_6},...,a_{i_{b+5}} \}$ and $q_2^i\rightarrow\{a_{i_{b+6}},...,a_{i_{2b+5}} \}$, then do nothing. Else there exist $r^1\in \{a_{i_1},a_{i_2}\}$, $r^2\in \{a_{i_4},a_{i_5}\}$, $r^3\in \{a_{i_6},...,a_{i_{2b+5}}\}$, such that $r^1$ is adjacent from $\{r^2,r^3\}$ and $r^2$ is adjacent from $r^3$. In this case    delete $\{a_{i_1},...,a_{i_{2b+5}}\}\backslash \{r^1,r^2,r^3\}$ from $\varphi$. Now apply this algorithm for all $i\in \{1,...,l\}$. We get from $\varphi$ a new ordering, say $\tilde{\varphi}:=(a_{r_1},...,a_{r_f})$, with $3l\leq f\leq l(2b+5)$. Let $\hat{\chi}:=(W_{r_1},...,W_{r_f})$. Clearly for $i=1,...,f$, $a_{r_i}\in W_{r_i}$. Let $\varphi^{*}$ be the ordering obtained from $\tilde{\varphi }$ after applying the transitive operation to all sister leaves (if exist) of $T|\{a_{r_1},...,a_{r_f}\}$ under $\tilde{\varphi}$. 
\begin{claim}\label{cl2}
There exists an ordering $\Sigma$ of $H$, satisfying all the following:\\
$\bullet$ $V(H)$ is the disjoint union of $V_1$ and $V_2$ under $\Sigma$.\\
$\bullet$ $H$$\mid$$V_1$is a path-galaxy under $\Sigma_{1}$, the restriction of $\Sigma$ under $V_1$.\\
$\bullet$ $T$$\mid$$\{a_{r_1},...,a_{r_f}\}$ is a copy of $H$$\mid$$V_2$,and $\varphi^{*}$ is the ordering $\Sigma_{2}$, where $\Sigma_{2}$ is the restriction of $\Sigma$ under $V_2$.
\end{claim} 
\noindent \sl {Proof of Claim \ref{cl2}. }\upshape   Define operation $\Upsilon_{1}$ to be the permutation of the vertices $s_{1},...,s_{5}$ that converts the ordering $(s_{1},s_{2},s_{3},s_{4},s_{5})$ to the ordering $(s_{4},s_{1},s_{3},s_{5},s_{2})$. Let $\Theta_{H}(\theta)$ be the set of vertex orderings of $H$ that are obtained from $\theta$ by applying opperation $\Upsilon_{1}$ to the golden vertices of some pair-stars of $H$ under $\theta$. Clearly $\mid$$\Theta_{H}(\theta)$$\mid$ $= 2^l$. Write $\Theta_{H}(\theta):=\{\theta_{1},...,\theta_{2^l}\}$. Fix $i\in \{1,...,2l\}$. For all $\vartheta \in \Theta_{H}(\theta)$, let $\vartheta_{P}$ be the restriction of $V$ to $V_P$, where $H$$\mid$$V_P$ is the path-galaxy under $\vartheta_{P}$ and with maximal order.  Then there exist unique ordering $\Sigma\in \Theta_{H}(\theta)$ such that $V_1=V_P$ and $T$$\mid$$\{a_{r_1},...,a_{r_f}\}$ forms a copy of $H$$\mid$$V_2$, where $V_2=V\backslash V_P$. And moreover $\varphi^{*}$ is the ordering $\Sigma_{2}$, the restriction of $\Sigma$ to $V_2$. This terminates the proof of Claim \ref{cl2}.
% Let us call \textit{sister leaves} any two consecutive vertices of a pair-star of $H$ under $\alpha_{i}$  that are leaves incident to the same center. Now for all $i\in \{1,...,2l\}$, define $\mathcal{R}_H(\alpha_{i}):=\{\alpha_{i}^{'}: \alpha_{i}^{'}$ is obtained from $\alpha_{i}$ by reversing the ordering of some sister leaves under $\alpha_{i}\}$. Note that possibly $\mathcal{R}_H(\alpha_{i})$ is empty. Now clearly one can notice that when performing operation $\Upsilon_1$ to the golden vertices, a vertex disjoint triangle and two stars will appear instead of the pair-star. Define $\mathcal{R}:=\displaystyle{\bigcup_{i=1}^{2l}\mathcal{R}_H(\alpha_{i})}$.
  $\hfill {\lozenge }$\medbreak
  \noindent %Let us for simplicity, rename $\Sigma$ as follows: 
 Let $\Sigma_{i}$ be the restriction of $\Sigma$ to $V_i$ for $i=1,2$. Write $\Sigma =(m_1,...,m_{h})$, $\Sigma_{1}= (m_{g_1},...,m_{g_{\kappa}})$, $\Sigma_{2}=(m_{q_1},...,m_{q_f})$. Let $\mathcal{P}:= H$$\mid$$V_1$. We know that $\mathcal{P}$ is a uniform path-galaxy under $\Sigma_{1}$.
   %and let $\{g_1,...,g_{\kappa}\}=\{1,...,h\}\backslash\{q_1,...,q_f\}$ with $g_1<...<g_{\kappa}$.
    Let $Q_1,...,Q_{\eta}$ be the stars of $\mathcal{P}$ under $\Sigma_{1}$. Let $H^+$ be the tournament obtained from $H$ by replacing $Q_i$ by a star with $2^{b-1}$ leaves for $i=1,...,\eta$. More formal speaking: For all $1\leq i\leq\eta$, let $m_{g_{i_0}}$ be the center of $Q_i$ and $m_{g_{i_1}},...,m_{g_{i_b}}$ be its leaves (clearly $m_{g_{i_1}},...,m_{g_{i_b}}$ are consecutive under $\Sigma$).
     Let $H^+$ be the tournament with $V(H^+)=V(H)\cup (\displaystyle{\bigcup_{i=1}^{\eta}}\{c_1^i,...,c_{\tau}^i\}$ such that 
     $E(B(H^+,\Sigma^{+}))=E(B(H,\Sigma ))\cup (\displaystyle{\bigcup_{i=1}^{\eta}}\{m_{g_{i_0}}c_1^{i},...,m_{g_{i_0}}
     c^i_{\tau}\})$, where $\Sigma^{+}$ is the ordering obtained from $\Sigma$ after inserting $c_1^i,...,c_{\tau}^i$ just 
     before $m_{g_{i_1}}$ in $\Sigma$ for all $1\leq i\leq\eta$, where $\tau = 2^{b-1}-b$.  %Clearly $1\leq q_1 <...<q_f\leq 2l$. 
     Rename $\Sigma^{+}$ as follows: $\Sigma^{+}=(n_1,...,n_{h+\eta \tau})$. Let $\Sigma^{+}_{2}:=(n_{h_1},...,n_{h_f})$ be the restriction of $\Sigma^{+}$ to $V_2$. Rename $W_{r_i}$ by $N_{h_i}$ for $i=1,...,f$. 
   Now enrich $\hat{\chi}=(N_{h_1},...,N_{h_f})$ by $p:=\kappa +\eta \tau$ sets from $\chi$ after renaming them by $N_{e_1},...,N_{e_{p}}$, in a way that the outcome will be $(N_1,...,N_{h+\eta\tau})$ and $\hat{\chi}^+:= (N_1,...,N_{h+\eta\tau})$ is a smooth $(c,\frac{1}{h},\overline{w})$-structure, where $\overline{w}$ is an all-zero vector of length $h+\eta\tau$. We can do that because for any $i\in\{1,...,f-1\}$, there exist in $\chi$ at least $\nu$ sets lying between $N_{h_i}$ and $N_{h_{i+1}}$. Clearly $\Sigma^{+}_{1}=(n_{e_1},...,n_{e_p})$ is the restriction of $\Sigma^{+}$ to $V_1^+:=V(H^{+})\backslash V_2$, and $H_1^+:= H^+|V_1^+$ is a path-galaxy under $\Sigma_{1}^+$. Let $Q_1^+,...,Q_{\eta}^+$ be the stars of $H_1^+$ under $\Sigma_{1}^{+}$.  For simplicity, for $i=1,...,p$, let us rename $N_{e_i}$ and $n_{e_i}$ by $E_i$ and $w_i$ respectively. For all $1\leq i\leq \eta$, let $w_{i_o}$ be the center of $Q_i^+$ and $w_{i_1},...,w_{i_\rho}$ be its leaves.
    
 \begin{claim}\label{cl3}
 For all $1\leq i\leq \eta$, there exist $b$ sets $E_{i_{\iota_{1}}},...,E_{i_{\iota_{b}}}$, with $i_1\leq i_{\iota_{1}}<...<i_{\iota_{b}}\leq i_{\rho}$, such that $\mathcal{P}$ is well-contained in $\chi^{*}$, the smooth $(c,\lambda ,w^*)$-structure containing the sets in $E:=\displaystyle{\bigcup_{i=1}^{i=\eta}}\{E_{i_0},E_{i_{\iota_{1}}},...,E_{i_{\iota_{b}}}\}$ (note that the sets of $E$ are ordered in $\chi^{*}$ according to their appearance in $\chi$), where $w^*$ is an all-zero vector of length $\eta (b+1)$.
 \end{claim}
 \noindent \sl {Proof of Claim \ref{cl3}. }\upshape 
   %For simplicity, for $i=1,...,\kappa$ let us rename $N_{g_i}$ and $m_{g_i}$ by $E_i$ and $n_i$ respectively. Let $Q_1,...,Q_{\eta}$ be the stars of $\mathcal{P}$ under $\Lambda :=(n_1,...,n_{\kappa})$. Required to prove that there exist vertices $x_i\in E_{_i}$ for $i=1,...,\eta$, such that $T$$\mid$$\{x_1,...,x_{\kappa}\}$ is a copy of $\mathcal{P}$ and the ordering $(x_1,...,x_{\kappa})$ is one of the ordering of $\mathcal{P}$ in $\Sigma_{\mathcal{P}}(\Lambda )$. 
   Let $Y:= \{a_{r_1},...,a_{r_f}\}$ and for $i=1_0,...,1_{\rho}$, let $E_i^*=\displaystyle{\bigcap_{x\in Y}E_{i,x}}$. By Lemma \ref{g}, for all $i\in \{1_0,...,1_{\rho}\}$, $\mid$$E_i^{*}$$\mid$ $\geq (1-\frac{f}{h})\mid$$E_i$$\mid$ $\geq\frac{2\mid E_i\mid}{9}\geq\frac{ 2c}{9}n$. Denote $\frac{ 2c}{9}$ by $\hat{c}$. For simplicity, for $i=1,...,\kappa$, let us rename $m_{g_i}$ by $d_i$. For $k \in \lbrace 1,...,\eta \rbrace$, define $\mathcal{P}^{k} = \mathcal{P}$$\mid$$\bigcup_{j=1}^{k} V(Q_{j})$, where $\mathcal{P}^{\eta} = \mathcal{P}$.  The proof works as follows: we will construct $\mathcal{P}$ star by star after updating the sets corresponding to each star in order to merge it with the previously constructed stars. Let $\Sigma_{1}^{k}:=(d_{k_1},...,d_{k_q})$ be the restriction of $\Sigma_{1}$ to $V(\mathcal{P}^k)$, where $k\in\{1,...,\eta\}$ and $q=k(b+1)=\mid$$\mathcal{P}^k$$\mid$.
   As $\alpha < \frac{\hat{c}}{\rho +1}$ and $E^*_{1_i}\geq \hat{c}n$ for $i=0,...,\rho$, then Lemma \ref{q} implies that there exist $x_{1_i}\in E_{1_i}^*$ for $r=0,...,\rho$, such that $x_{1_0}\leftarrow \{x_{1_1},...,x_{1_{\rho}}\}$ if $Q^+_1$ is a left star, and $\{x_{1_1},...,x_{1_{\rho}}\}\leftarrow x_{1_0}$ if $Q^+_1$ is a right star. By Theorem \ref{transitive}, $T|\{x_{1_1},...,x_{1_{\rho}}\}$ contains a transitive subtournament of size $b$. So there exists $b$ vertices $x_{1_{\iota_{i}}}\in E_{1_{\iota_{i}}}$ for $i=1,...,b$, with $1_1\leq 1_{\iota_{1}}<...<1_{\iota_{b}}\leq 1_{\rho}$, and such that $T|\{x_{1_{\iota_{1}}},...,x_{1_{\iota_{b}}}\}$ is a transitive subtournament. Let $\Lambda_{1}$ be the ordering of $\{x_{1_0},x_{1_{\iota_{1}}},...,x_{1_{\iota_{b}}}\}$ according to their appearance in $\chi$. Then by applying the transitive operation to $\{x_{1_{\iota_{1}}},...,x_{1_{\iota_{b}}}\}$ transforms $\Lambda_{1}$ to the star ordering of $Q_1$ (i.e the restriction $\Sigma_{1}^{1}$ of $\Sigma_{1}$ to $V(Q_1)$). Hence $\Lambda_{1}\in \Pi_{\mathcal{P}^{1}}(\Sigma_{1}^{1})$.  Fix $k\in \{1,...,\eta -1\}$ and assume that for $i=1,...,k$, there exist $b$ sets $E_{i_{\iota_{1}}},...,E_{i_{\iota_{b}}}$, with $i_1\leq i_{\iota_{1}}<...<i_{\iota_{b}}\leq i_{\rho}$, such that $\mathcal{P}^k$ is well-contained in $\chi_{k}^{*}$, the smooth $(c,\lambda ,w_k^*)$-structure containing the sets in $E:=\displaystyle{\bigcup_{i=1}^{i=k}}\{E_{i_0},E_{i_{\iota_{1}}},...,E_{i_{\iota_{b}}}\}$. Denote by $P_k$ the well-contained copy of $\mathcal{P}^k$.  Let $t=k+1$. For $i=0,...,\rho$, let $E_{t_i}^{*}=\displaystyle{\bigcap_{x\in Y\cup P_k}E_{t_i,x}}$. By Lemma \ref{g}, for all $i\in \{0,...,\rho\}$, $\mid$$E_{t_i}^{*}$$\mid$ $\geq (1-\frac{f+k(b+1)}{h})\mid$$E_{t_i}$$\mid$ $\geq\frac{(b+1)\mid E_{t_i}\mid}{h}\geq\frac{(b+1)c}{h}n$. As $\alpha < \frac{(b+1)c}{h(\rho +1)}$ and $E^*_{t_i}\geq \frac{b+1}{h}cn$ for $i=0,...,\rho$, then Lemma \ref{q} implies that there exist $x_{t_i}\in E_{t_i}^*$ for $i=0,...,\rho$, such that $x_{t_0}\leftarrow \{x_{t_1},...,x_{t_{\rho}}\}$ if $Q^+_t$ is a left star, and $\{x_{t_1},...,x_{t_{\rho}}\}\leftarrow x_{t_{0}}$ if $Q^+_t$ is a right star. By Theorem \ref{transitive}, $T|\{x_{t_1},...,x_{t_{\rho}}\}$ contains a transitive subtournament of size $b$. So there exists $b$ vertices $x_{t_{\iota_{i}}}\in E_{t_{\iota_{i}}}$ for $i=1,...,b$, with $t_1\leq t_{\iota_{1}}<...<t_{\iota_{b}}\leq t_{\rho}$, and such that $T|\{x_{t_{\iota_{1}}},...,x_{t_{\iota_{b}}}\}$ is a transitive subtournament. Now merging $P_k$ with  $T|\{x_{t_0},x_{t_{\iota_{1}}},...,x_{t_{\iota_{b}}}\}$ we get a well-contained copy of $\mathcal{P}^k$ and moreover the ordering of the vertices of $P_t$ according to their appearance in $\chi$ belongs to $\Pi_{\mathcal{P}^{t}}(\Sigma_{1}^{t})$, where $P_t$ is the well-contained copy of $\mathcal{P}^t$, and $\Sigma_{1}^{t}$ is the restriction of $\Sigma_{1}$ to $V(\mathcal{P}^t)$. Applying this algorithm for $t=2,...,\eta$ completes the proof. $\hfill {\lozenge }$\medbreak
  \noindent Now by merging the well-contained copy of $\mathcal{P}$ with $T$$\mid$$Y$ we get a tournament in $T$ which is isomorphic to $H$. So $T$  contains $H$, a contradiction. This completes the proof. $\hfill {\square }$
\end{proof}

\end{document}